\documentclass[12pt]{article}
\usepackage{amssymb,amsmath, amsthm}
\usepackage{color,xcolor}
\setlength{\headsep}{-5pt}
\setlength{\headheight}{-5pt}
\setlength{\textheight}{9in}
\setlength{\oddsidemargin}{5pt}
\setlength{\textwidth}{435pt}
\setlength{\parskip}{8pt plus 2pt minus 1pt}
\setlength{\leftmargini}{5pt}
\newcommand{\eb}{\begin{equation}}
\newcommand{\ee}{\end{equation}}
\newcommand{\ebx}{\begin{equation*}}
\newcommand{\eex}{\end{equation*}}
\newtheorem{lemma}{Lemma}[section]
\newtheorem{proposition}[lemma]{Proposition}
\newtheorem{theorem}[lemma]{Theorem}

\newtheorem{corollary}[lemma]{Corollary}
\newtheorem{definition}[lemma]{Definition}

\allowdisplaybreaks

\makeatletter
\renewcommand*\env@matrix[1][*\c@MaxMatrixCols c]{%
  \hskip -\arraycolsep
  \let\@ifnextchar\new@ifnextchar
  \array{#1}}
\makeatother

\begin{document}

\title{Rigidity of holomorphic maps between Shilov boundaries of type-I bounded symmetric domains}

\author{Yun Gao\footnote{School of Mathematical Sciences, Shanghai Jiao Tong University, Shanghai, People's Republic of China. \textbf{Email:}~gaoyunmath@sjtu.edu.cn}}

\maketitle

\begin{abstract}
	
	We introduce  the concept  of orthogonal structure on complex Grassmannians. Based on this structure, we define the notion of orthogonal mappings. This class of maps generalizes holomorphic maps between the Shilov boundaries of type-I bounded symmetric domains. By analyzing the geometric properties of these orthogonal mappings, we obtain rigidity results for such mappings. As an application, we establish a rigidity result for the holomorphic maps from the Shilov boundary of rank $1$ type I bounded symmetric domain $\Omega_{1,s}$ (i.e. unit spheres) to the Shilov boundary of a higher-rank type-I domain $
	\Omega_{r',s'}$, where $s\ge 2$ and $2\le r'\le s'$. More specifically, we show that: (1) such maps are constant when $s'-r' < s-1$; (2) for $s-1 \le s'-r' < 2s-2$, they reduce to the standard linear embeddings after normalization by applying automorphisms on both sides. Our results are optimal and  generalize  the well-known optimal bounds for  proper mappings between rank $1$ cases and CR maps between Shilov boundaries of higher-rank type I bounded symmetric domains. 
\end{abstract}

\section{Introduction}

Rigidity of holomorphic/CR maps between bounded symmetric domains is a classical topic, extensively studied since Bochner and Calabi. While bounded symmetric domains exhibit rich rigidity phenomena, understanding differs significantly by rank. For an extensive literature, the reader is referred to Mok  (\cite{Mok1}), as well as many references therein.

The rigidity for the proper holomorphic maps between balls (rank 1 bounded symmetric domains) has received considerable attention with its origins tracing back to Poincar\'{e}'s work for $2$-dimensional unit ball. For unit balls in different dimensions, this field has been extensively explored by many mathematicians from various directions including algebraic geometry, Cauchy-Riemann geometry, and partial differential equation, etc. see for example  Cima-Suffridge \cite{CS}, D'Angelo \cite{Da1}, \cite{Da2}, D'Angelo-Kos-Riehl \cite{DKR}, D'Angelo-Lebl \cite{DL1},\cite{DL2}, Faran \cite{Fa}, Forstneric \cite{F1},\cite{F2},  Huang \cite{Hu1},\cite{Hu2}, Huang-Ji \cite{HJ}, and Huang-Ji-Yin \cite{HJY}, among others.

Compared to the rank 1 case, rigidity properties for holomorphic maps between higher rank bounded symmetric domains are much less understood.
Tsai's theorem showed if $ f: \Omega \to \Omega' $ is a proper holomorphic map between irreducible bounded symmetric domains with $\text{rank}(\Omega) \geq \text{rank}(\Omega') \geq 2 $, then $\text{rank}(\Omega) = \text{rank}(\Omega') $ and $f$ is a totally geodesic isometric embedding. This result resolved a conjecture of Mok \cite{Mok2} and highlighted the importance of rank conditions in determining the rigidity of such maps. For more results about the proper holomorphic maps with  $ \text{rank}(\Omega)< \text{rank}(\Omega')  $, we refer the readers to Chan\cite{Ch1},\cite{Ch2},
Henkin-Novikov\cite{HN},  Kim\cite{Kim}, Kim-Mok-Seo \cite{KMS}, Kim-Zaitsev\cite{KZ13, KZ15}, Mok \cite{Mok2}, Mok-Ng-Tu\cite{MNT}, Ng[Ng1-3], Seo[Seo1-3],  Tu[Tu1-2] to name a few. Almost all the aforementioned results are for $\text{rank}(\Omega)\ge 2$.
In \cite{XY}, the authors proved a rigidity result for proper holomorphic maps from the unit ball (rank $1$) to the Type IV classical domain (rank $2$) when the codimension is small. On the other hand, the case of rigidity for holomorphic maps from rank $ r = 1 $ to higher-rank Type I bounded symmetric domains remains unresolved due to limitations in the analysis of boundary components.

Let $r\le s \in \mathbb N$. Type I bounded symmetric domains \(\Omega_{r,s} = \{ Z \in \mathbb{C}^{r \times s} \mid I_r - ZZ^H > 0 \}\) with rank \(r\), along with their Shilov boundaries \(S(\Omega_{r,s}) = \{ Z \in \mathbb{C}^{r \times s} \mid I_r - ZZ^H = 0 \}\) where $Z^H=\bar Z^t$, have been studied by Kim-Zaitsev in \cite{KZ13} with regard to CR rigidity.  
The Shilov boundary imposes stringent constraints on holomorphic maps, forcing them to align with canonical substructures. Kim-Zaitsev show that CR mappings between $S(\Omega_{r,s})$ and $S(\Omega_{r',s'})$ are linear under the assumptions $s'-r'<2(s-r)$, $2\le r<s$, $2\le r'<s'$, and certain smoothness condition, using the Cartan moving frame method. In a very recent paper \cite{Kim2}, the author also describes explicitly the holomorphic mappings between Type I bounded symmetric domains that preserve Shilov boundary.  These results crucially depend on the domains having $r$, $r'\ge 2$.  The analog of Kim-Zaitsev's theorem in the rank $1$ case was solved earlier by Faran \cite{Fa}.

Most of the known rigidity results about proper holomorphic mappings between balls rely heavily on Tanaka-Chern-Moser theory \cite{CM}, \cite{Ta}, which is unavailable for higher rank bounded symmetric domains. On the other hand, the Cartan moving frame method does not work effectively for rank $1$ bounded symmetric domains. Hence, the case of mappings from rank $1$ domains (complex balls) to higher-rank domains requires a different approach.

We introduce a novel geometric approach to this problem by defining an orthogonal structure on Grassmannians and studying associated orthogonal mappings between them. This framework, integrating orthogonality with an effective analysis of linear subspaces and projective geometry techniques, yields rigidity results without resorting to heavy machinery. These orthogonal mappings slightly generalize the holomorphic maps between the Shilov boundaries of Type I bounded symmetric domains. Consequently, our approach provides new rigidity results for such maps. Here, by a holomorphic map between the Shilov boundaries, we mean a holomorphic map $F$ defined on a neighborhood $U$ of a point on $S(\Omega_{r,s})$ such that $F(U\cap S(\Omega_{r,s}))\subset S(\Omega_{r',s'})$.

Let $r\le s \in \mathbb N$ and denoted by $\mathbb C^{r,s}$ the Euclidean space $\mathbb C^{r+s}$ with the indefinite inner product
$$
\langle z, w\rangle_{r,s}
=z_1\bar w_1+\cdots+z_r\bar w_r-z_{r+1}\bar w_{r+1}-\cdots - z_{r+s}\bar w_{r+s},
$$
where $z=(z_1,\ldots,z_{r+s})$ and $w=(w_1,\ldots,w_{r+s})$. Denote $\mathbb P\mathbb C^{r,s}=\mathbb P^{r,s}$.

A positive $r$-plane (resp. null $r$-plane) in $\mathbb C^{r,s}$ is a linear subspace where $\langle\cdot,\cdot\rangle_{r,s}$ restricts to a positive-definite form (resp. a null form). The Type I domain $\Omega_{r,s}$ parametrizes such positive $r$-planes, while its Shilov boundary $S(\Omega_{r,s})$ corresponds to null $r$-planes in $\mathbb C^{r,s}$.

Consider the Grassmannian variety $G(r,r+s)$, whose points bijectively correspond to the $r$-dimensional linear subspaces in
$\mathbb C^{r+s}$. Let $p\in G(r,r+s) $. We use $V_p$ to denote the  $r$-dimension linear subspace corresponding to $p$. An orthogonal structure can be defined on $G(r,r+s)$ induced by the inner product $\langle\cdot,\cdot \rangle_{r,s}$. We call two points $p$, $q$ orthogonal in $G(r,r+s)$ if and only if $V_p\perp V_q$ in $\mathbb C^{r,s}$, denoted by $p\perp q$. We then denote by $\mathcal {G}(r,s)$ the Grassmannian variety $G(r,r+s)$
equipped with the orthogonal structure. This leads to the definition of $\textit{local orthogonal maps}$.
\begin{definition}\label{orth}
	Let $ U \subseteq \mathcal{G}(r,s) $ be a connected open set containing a null point. A holomorphic map $ F: U \to \mathcal{G}(r',s') $ is called \emph{orthogonal} if $ F(p) \perp F(q) $ for all $ p, q \in U $ such that $p\perp q$. We also simply call $F$ a \emph{local orthogonal map} from $ \mathcal{G}(r,s) $ to $ \mathcal{G}(r',s') $.
\end{definition}

In our definition of orthogonal maps, we required the domain U to contain a null
point, ensuring the orthogonality condition would not be vacuous. However, this
condition is not essential to the definition. Orthogonal maps can be defined more
generally, requiring only that $U$ contains at least one pair of orthogonal points. From
Proposition \ref{orthShilov}, we shall see that the local orthogonal maps from $\mathcal G(r,s)$ to $\mathcal G(r',s')$ and the local holomorphic maps between Shilov boundary $S(\Omega_{r,s})$ and $S(\Omega_{r',s'})$ are actually almost the same set of mappings although the former  could be defined slightly more generally. 

We are now ready to state our main theorems. Further explanation and details can be found in Section 3. 

\begin{theorem}\label{main}
	Let $s\ge 2$, $2\le r'\le s'\in \mathbb N$ and let $F$ be a local orthogonal map from $ \mathcal G(1,s)$ to $\mathcal G(r',s')$. 
	\begin{itemize}
	\item[(1)]If $s'-r'<s-1$, then $F$ is a constant map.
	
	\item[(2)] If $s-1\le s'-r'< 2s-2$, then after composing with suitable automorphisms of $\mathcal G(1,s)$ and $\mathcal G(r',s')$,
		$F$ is given by the following
		$$[z_0,\bf z]\to \left(\begin{array}{ccccc}
			I&0&I&0&0\\
			0&z_0&0&\bf z &0\\
		\end{array}\right),$$
		where $[z_0,{\bf z}]=[z_0,z_1,\cdots,z_{s}]\in \mathcal G(1,s)\cong \mathbb P^{1,s}$.

	\end{itemize}

\end{theorem}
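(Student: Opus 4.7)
The plan is to reduce the problem to the classical sphere-to-sphere rigidity of Faran and Huang--Ji by isolating the ``constant part'' of the image.

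First, introduce the common subspace $W_0 := \bigcap_{p \in U} V_{F(p)}$ of $\mathbb{C}^{r'+s'}$. For any null $p_0 \in U$ the relation $p_0 \perp p_0$ forces $F(p_0) \perp F(p_0)$, so $V_{F(p_0)}$ is isotropic; hence $W_0 \subseteq V_{F(p_0)}$ is itself isotropic. For every null $p \in U$, the chain $W_0 \subseteq V_{F(p)} \perp V_{F(p)}$ gives $V_{F(p)} \subseteq W_0^\perp$. This is a closed holomorphic condition on $p$, and since it holds on the piece of the strictly pseudoconvex hypersurface $S(\Omega_{1,s}) \cap U \subset S^{2s-1}$, analytic continuation (any holomorphic function vanishing on a piece of the unit sphere in $\mathbb{C}^s$ vanishes identically) propagates it to the whole of $U$. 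Therefore
$$W_0 \;\subseteq\; V_{F(p)} \;\subseteq\; W_0^\perp \qquad \text{for all } p \in U.$$

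Second, set $k := \dim W_0$ and pass to the quotient $W_0^\perp/W_0$, which inherits a non-degenerate Hermitian form of signature $(r'-k,\, s'-k)$. The induced map $\widetilde F(p) := V_{F(p)}/W_0 \in \mathcal{G}(r'-k,\, s'-k)$ has trivial common intersection, and the orthogonality passes cleanly to the quotient, so $\widetilde F$ is again a local orthogonal map. The key intermediate assertion, and what I expect to be the main obstacle of the proof, is that if $F$ is non-constant then $k = r'-1$. The heuristic is that two distinct null $1$-planes in $\mathcal{G}(1,s)$ are never orthogonal (since the maximal isotropic dimension equals $1$), so the orthogonality constraint in the source is sparse; yet for $r'-k \ge 2$, the analogous constraint on the target combined with the fact that two orthogonal isotropic $(r'-k)$-planes in $\mathbb{C}^{r'-k,\, s'-k}$ must coincide (again by the maximal-isotropic-dimension argument) collapses the image on the null locus in a manner incompatible with triviality of the common intersection. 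Making this rigorous requires a careful analysis of how the family $\{V_{\widetilde F(p)}\}$ can vary holomorphically off the null locus, using the range hypothesis $s'-r' < 2s-2$ to rule out potential high-codimension anomalies.

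Third, once $k = r'-1$ is established, the reduced map $\widetilde F : \mathcal{G}(1,s) \to \mathcal{G}(1,\, s'-r'+1)$ is a local orthogonal map between rank-one Grassmannians. By Proposition~\ref{orthShilov}, this corresponds to a local holomorphic map between the unit spheres $S^{2s-1}$ and $S^{2(s'-r')+1}$, to which the classical theorems of Faran \cite{Fa} and Huang--Ji \cite{HJ} apply: when $s'-r'+1 < s$ (equivalently $s'-r' < s-1$) the map is constant and so is $F$, proving part (1); when $s \le s'-r'+1 \le 2s-2$ (equivalently $s-1 \le s'-r' < 2s-2$) the map is a linear isometric embedding after unitary normalization on both sides. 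A $U(r',s')$-automorphism then normalizes $W_0$ to $\mathrm{span}\{e_i + e_{r'+i} : 1 \le i \le r'-1\}$, so that $W_0^\perp/W_0 \cong \mathbb{C}^{1,\, s'-r'+1}$; reading the linear isometric embedding in this basis reproduces exactly the matrix displayed in the theorem and completes part (2).
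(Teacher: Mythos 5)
Your overall architecture --- find a common isotropic subspace $W_0$ of dimension $r'-1$ inside all the $V_{F(p)}$, split off the induced rank-one map into $W_0^\perp/W_0 \cong \mathbb{C}^{1,s'-r'+1}$, and finish with ball rigidity --- is exactly the strategy of the paper, and your first and third steps are essentially sound (the paper invokes the orthogonal-map rigidity of Theorem \ref{ball} from \cite{GN} rather than passing back through Proposition \ref{orthShilov} to Faran and Huang--Ji, but these are interchangeable here). The problem is that you have left unproved precisely the step that carries all the weight: showing that $\dim W_0 = r'-1$ whenever $F$ is non-constant and $s'-r'<2s-2$. You flag this yourself as ``the main obstacle,'' and the heuristic you offer in its place does not close. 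Indeed, your observation that distinct null points of $\mathbb{P}^{1,s}$ are never mutually orthogonal means the orthogonality relation imposes \emph{no} direct constraint between $F$ of two distinct null points, so there is no ``collapse on the null locus'' to exploit; and the fact that two orthogonal maximal isotropic planes in the quotient must coincide gives no information until you already know the images of orthogonal points share a large isotropic intersection. Nothing in your sketch uses the quantitative hypothesis $s'-r'<2(s-1)$, which cannot be correct: the generalized Whitney map at the end of the paper shows the conclusion fails once that inequality is violated.

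The paper's Proposition \ref{null} supplies the missing argument, and it is genuinely quantitative. For a general (non-null) point $p$ one sets $N_p := V_{F(p)}\cap V_{F(p^\perp)}$, which is isotropic of some dimension $k\le r'$; choosing $s+1$ pairwise orthogonal non-null points $p_1,\dots,p_{s+1}$ in $\mathbb{P}^{1,s}$, one bounds $\dim\bigl(\sum_i V_{F(p_i)}\bigr)$ from below by $(s+1)(r'-k)+k$ (picking $r'-k$ vectors in each $V_{F(p_i)}$ outside the span of the others) and from above by $r'+s'-k$ (since the whole sum lies in $N_{p_1}^\perp$); the hypothesis $s'-r'<2(s-1)$ then forces $k>r'-2$, hence $k=r'-1$ unless $F$ is null. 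A further argument, using maximal null subspaces and the rigidity of orthogonal maps $\mathbb{P}^{1,s}\to\mathbb{P}^{0,s'-r'}$ from \cite{GN}, shows $N_p$ is independent of $p$, which is what identifies your $W_0$ and gives $\dim W_0=r'-1$. Without some version of this dimension count your proposal is a correct frame around an empty center, so as it stands it does not constitute a proof.
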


Furthermore, the inequalities governing the rigidity are sharp, as evidenced by the generalized Whitney map. Beyond these bounds, rigidity fails.

As a corollary, we have

\begin{corollary}\label{main2}
	Let $s\ge 2$, $2\le r'\le s'\in \mathbb N$, and let $U\subset \mathbb C^{s}$ be an open neighborhood of a point in  $ S(\Omega_{1,s})$.
	let $F$ be a holomorphic  map from $U$ to $\mathbb C^{r's'}$ satisfying 
	$F(U\cap S(\Omega_{1,s}) )\subset S(\Omega_{r',s'})$.
	\begin{itemize}
	\item[(1)]If $s'-r'<s-1$, then $F$ is a constant map.
	
	\item[(2)] If $s-1\le s'-r'< 2s-2$, then after composing with suitable automorphisms of $ \Omega_{1,s}$ and $ \Omega_{r',s'}$,
		$F$ is given by the following
		$$z\to \left(\begin{array}{ccc}
			I&0&0\\
			0&  z &0\\
		\end{array}\right).$$

	\end{itemize}

\end{corollary}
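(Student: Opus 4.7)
The plan is to reduce Corollary \ref{main2} directly to Theorem \ref{main} using the equivalence between local holomorphic Shilov-boundary maps and local orthogonal maps promised by Proposition \ref{orthShilov}. First, pass to the Grassmannian picture: the affine chart $z \mapsto [1:z]$ identifies $U \subset \mathbb{C}^s$ with an open subset $\tilde U \subset \mathcal{G}(1,s) \cong \mathbb{P}^{1,s}$ that contains at least one null point, since for $z \in U \cap S(\Omega_{1,s})$ one has $\langle(1,z),(1,z)\rangle_{1,s} = 1 - |z|^2 = 0$. Likewise, $W \mapsto \mathrm{rowspan}[I_{r'},W]$ embeds a Zariski-open subset of $\mathbb{C}^{r' \times s'}$ into $\mathcal{G}(r',s')$. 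Composing, $F$ lifts to a holomorphic map $\tilde F : \tilde U \to \mathcal{G}(r',s')$, and the hypothesis $F(U \cap S(\Omega_{1,s})) \subset S(\Omega_{r',s'})$ says exactly that $\tilde F$ carries null points of $\tilde U$ to null points of $\mathcal{G}(r',s')$. By Proposition \ref{orthShilov}, this is equivalent to $\tilde F$ being a local orthogonal map.

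Now apply Theorem \ref{main} to $\tilde F$. In case (1), with $s'-r'<s-1$, the theorem forces $\tilde F$ to be constant, so the $r'$-plane $\mathrm{rowspan}[I_{r'},F(z)]$ is independent of $z$, which in the chosen chart forces $F$ itself to be constant. In case (2), after composing on both sides with suitable automorphisms of $\mathcal{G}(1,s)$ and $\mathcal{G}(r',s')$, $\tilde F$ is pinned down as the explicit map
\[
[z_0,\mathbf{z}] \mapsto \mathrm{rowspan}\begin{pmatrix} I & 0 & I & 0 & 0 \\ 0 & z_0 & 0 & \mathbf{z} & 0 \end{pmatrix}.
\]
To recover the affine form asserted in the corollary, normalize to the chart $z_0 = 1$ and row-reduce using the leading $r' \times r'$ block, which then equals $I_{r'}$. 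The right-hand $r' \times s'$ block becomes
\[
\begin{pmatrix} I_{r'-1} & 0 & 0 \\ 0 & \mathbf{z} & 0 \end{pmatrix},
\]
matching the matrix in the corollary. A direct check shows that this matrix indeed satisfies $I_{r'} - W W^H = 0$ whenever $|\mathbf{z}|^2 = 1$, confirming consistency with the Shilov-boundary condition.

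It remains to verify that the Grassmannian automorphisms used above descend to automorphisms of the bounded symmetric domains. The automorphism group of the orthogonal structure $(\mathcal{G}(r,s),\perp)$ is $\mathrm{PU}(r,s)$, which is precisely the biholomorphism group of $\Omega_{r,s}$ and acts on $S(\Omega_{r,s})$ by the induced boundary action. Hence the normalization obtained on $\tilde F$ descends to one on $F$ by elements of $\mathrm{Aut}(\Omega_{1,s})$ and $\mathrm{Aut}(\Omega_{r',s'})$, yielding the two alternatives in the corollary.

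The only nontrivial ingredient is the invocation of Proposition \ref{orthShilov}, which converts the pointwise Shilov-to-Shilov hypothesis into the global orthogonality-preservation condition required by Theorem \ref{main}; that proposition is proved elsewhere in the paper. Everything else is bookkeeping: unwinding the affine parametrizations on both ends and performing elementary row reductions on the explicit matrix supplied by Theorem \ref{main}.
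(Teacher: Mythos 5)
Your proposal is correct and follows exactly the route the paper itself takes: the paper derives this corollary ``directly'' from Theorem \ref{main} via the correspondence between Shilov-boundary-preserving holomorphic maps and local orthogonal maps given by Proposition \ref{orthShilov}. You have merely filled in the chart identifications, the descent of the Grassmannian automorphisms to $\mathrm{Aut}(\Omega_{1,s})$ and $\mathrm{Aut}(\Omega_{r',s'})$, and the row reduction of the explicit matrix, all of which check out.
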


We note that our proof of Theorem \ref{main} is centered around the orthogonality preservation property and the dimensional analysis deduced from it. This new perspective enables us to avoid sophisticated computations as in the previous works, and makes our arguments considerably shorter.

\section{Orthogonal structure on Grassmannian}

In \cite{GN}, the authors introduce an orthogonal structure in projective space to solve the rigidity problems for proper holomorphic mappings among generalized balls. Building upon this framework, in the current section we will generalize the definition of orthogonal structure to the Grassmannian manifold.

Let $r,s,t\in\mathbb N$ and $n:=r+s+t>0$. Define the indefinite inner product of signature $(r;s;t)$ on $\mathbb C^{n}$ as:
$$
	\langle z, w\rangle_{r,s,t}
	=z_1\bar w_1+\cdots+z_r\bar w_r-z_{r+1}\bar w_{r+1}-\cdots - z_{r+s}\bar w_{r+s},
$$
where $z=(z_1,\ldots,z_{n})$ and $w=(w_1,\ldots,w_{n})$. We also define the indefinite norm  $\|z\|^2_{r,s,t}=\langle z, z\rangle_{r,s,t}$. Then, for any $z\in \mathbb C^{r,s,t}$ endowed with this inner product, we call it a \textit{positive point} if $\|z\|^2_{r,s,t}>0$; a \textit{negative point} if  $\|z\|^2_{r,s,t}<0$ and a \textit{null point} if  $\|z\|^2_{r,s,t}=0$. Two vectors $z$, $w$ are called orthogonal (denote $z\perp w$) if $\langle z, w\rangle_{r,s,t}=0$. The \textit{orthogonal complement} of $z$ is defined as
$$z^{\perp}=\{w\in  \mathbb C^{r,s,t} \mid \langle z, w\rangle_{r,s,t}=0\}.$$

We denote by $\mathbb C^{r,s,t}$ the $\mathbb C^{n}$ with the Hermitian inner product defined above and $\mathbb P^{r,s,t}:=\mathbb P\mathbb C^{r,s,t}$. We write $\mathbb C^{r,s}$ and $\mathbb P^{r,s}$ instead of $\mathbb C^{r,s,0}$ and $\mathbb P^{r,s,0}$.

Consider a complex linear subspace $V\subset \mathbb C^{r,s,t}$ such that the restriction of  $\langle\cdot,\cdot\rangle_{r,s}$ on $V$ has the signature $(a; b; c)$;  we refer to $ V$ as an \textit{ $(a,b,c)$-subspace} of $\mathbb C^{r,s,t}$. 

If $a=b=0$, $V$ is called a \textit{null space}. Similarly, it is called a \textit{positive space} (resp. \textit{negative space}) if $b=c=0$ (resp. $a=c=0$).  We will also use the terms \textit{null $k$-plane, positive $k$-plane}, and \textit{negative $k$-plane} when $\dim V=k$.
Obviously, the maximum dimension of null spaces in $\mathbb P^{r,s}$ is $\min\{r,s\}$.  Null spaces with the maximal dimension are called the \textit{maximal null spaces}.

Let $r$, $s$ be positive integers and $r\le s$. Consider the Grassmannian variety $G(r,r+s)$, whose closed points are in bijection with  
$r$-dimensional vector subspaces 
 $V\subset \mathbb{C}^{r+s}$. Fix the standard basis $(e_1, \ldots ,e_{r+s})$ of $\mathbb{C}^{r+s}$. A linear subspace 
 $V$ can be represented as the row space of an $r \times (r+s)$ matrix:

$$A=\left(\begin{array}{cccccc}
	a_{11}&a_{12}&\cdots&a_{1,r+s}\\
a_{21}&a_{22}&\cdots&a_{2,r+s}\\
	\vdots&\vdots&\ddots&\vdots\\
			a_{r1}&a_{r2}&\cdots&a_{r,r+s}\\
			\end{array}\right),
$$
where the rows of $A$ encode a set of spanning vectors of $V$ with respect to the standard basis.
Such $A$ is called a \textit{representative matrix} of $V$.

The matrix $A$ is not unique: left multiplication by any invertible $r\times r$ matrix $X$ yields another representative matrix $XA$ for the same subspace $V$.
 Conversely, if two matrices $A$ and $A'$ represent the same subspace, there exists an invertible $r\times r$ matrix 
$X $ such that $A'=XA$.

 {\bf Notations:} For a point $p\in G(r,r+s)$, let $V_p\subset \mathbb{C}^{r+s}$ denote the $r$-dimensional linear subspace 
 corresponding to $p$, and let $A_p$ denote a representative matrix of $V_p$. If the row vectors $\alpha_1,\cdots,\alpha_r$ of $A_p$ are pairwise orthogonal with respect to the indefinite Hermitian inner product on $\mathbb C^{r,s}$ (i.e. $\alpha_i \perp \alpha_j$ in $\mathbb C^{r,s}$ ), then $A_p=(\alpha_1,\cdots,\alpha_r)^t$ is called a representative matrix of $p$,  an \textit{orthogonal representative matrix} for $p$.

 By equipping $\mathbb C^{r,s}$ with an indefinite inner product, we naturally induce an orthogonal structure on the Grassmannian $G(r,r+s)$. This structure categorizes subspaces based on their metric properties:

\begin{definition}

Let two points $ p, q \in G(r,r+s) $ be represented by $r\times (r+s)$
 matrices $A_p$ and $A_q$. With the signature matrix $$ I_{r,s} = \begin{pmatrix} I_r & 0 \\ 0 & -I_s \end{pmatrix}, $$ 
 we say $p$ is orthogonal to $q$(denoted $p\perp q$) if and only if
\[
A_pI_{r,s} A_q^H = 0,
\]
where $ A^H=\bar A^t $ denotes the Hermitian transpose.

A point $ p \in \mathcal{G}(r,s) $ is classified as
\begin{itemize}
\item[(1)] \emph{null} if
$
A_p I_{r,s} A_p^H = 0 \quad 
$
\item[(2)] \emph{positive} if $
A_p I_{r,s} A_p^H > 0 \quad 
$
\end{itemize}

Here, $\mathcal{G}(r,s)$ denotes the Grassmannian $ G(r,r+s) $
 endowed with this orthogonal structure.

\end{definition}

Obviously, for any two points $p$,$q\in \mathcal{G}(r,s)$,
$$p\perp q \iff V_p \subseteq (V_q)^\perp \subseteq \mathbb{C}^{r,s},$$ where $ (V_q)^\perp $ denotes the orthogonal complement with respect to $ \langle \cdot, \cdot \rangle_{r,s} $.

Hence,
if $ p \perp q $ in $ \mathcal{G}(r,s) $, then every vector $ \alpha \in V_p $ is orthogonal to every vector $ \beta \in V_q $ with respect to $ \langle \cdot, \cdot \rangle_{r,s}$. Furthermore, we immediately observe that $ \mathcal{G}(1,s) $ coincides with the projective space $ \mathbb{P}^{1,s} $.

Let $ M_{r,s} $ denote the space of $ r \times s $ complex matrices. The \emph{type-I irreducible bounded symmetric domain} $ \Omega_{r,s} \subseteq M_{r,s} \cong \mathbb{C}^{r \times s} $ is defined by
\[
\Omega_{r,s} = \{ A \in M_{r,s} \mid I_r - AA^H > 0 \}.
\]
Here we denote by $>0$ the positive definiteness of square matrices.
 This domain corresponds bijectively to the set of positive $ r $-planes in $ \mathbb{C}^{r,s} $, making $ \Omega_{r,s} $ precisely the set of positive points in $ \mathcal{G}(r,s) $. The \emph{Shilov boundary} of $ \Omega_{r,s} $
\[
S(\Omega_{r,s}) = \{ A \in M_{r,s} \mid I_r = AA^H \}
\]
coincides with the set of null points in $ \mathcal{G}(r,s) $. Moreover, if $ p \in S(\Omega_{r,s}) $, then $ V_p $ is a maximal null subspace of $ \mathbb{C}^{r,s} $.

\begin{definition}\label{orth}
Let $ U \subseteq \mathcal{G}(r,s) $ be a connected open set containing a null point. A holomorphic map $ F: U \to \mathcal{G}(r',s') $ is called \emph{orthogonal} if $ F(p) \perp F(q) $ for all $ p, q \in U $ such that $p\perp q$. We also simply call $F$ a \emph{local orthogonal map} from $ \mathcal{G}(r,s) $ to $ \mathcal{G}(r',s') $.
\end{definition}

{\bf Remark:} For the sake of simplicity, in our definition we require the domain $U$ to contain a null point, i.e. to have a non-empty intersection with $S(\Omega_{r,s})$ so that the
orthogonality condition will not become vacuous. (Otherwise, it could happen that there are no orthogonal points in $U$.) With this requirement,
Proposition \ref{orthShilov1} and Proposition  \ref{orthShilov} below will imply that orthogonal maps and holomorphic
maps between Shilov boundaries are the same set of mappings. However, we remark that in principle one can define orthogonal maps more generally without requiring $U\cap S(\Omega_{r,s})\neq\emptyset$ (but then we need to assume that $U$ contains at least a pair of orthogonal points) and some of our results (e.g. Theorem \ref{main}) will still be valid. Thus, we may say that a local orthogonal map is a slightly more general object.

\begin{definition}
An orthogonal map $ F: U\subseteq \mathcal{G}(r,s)  \to \mathcal{G}(r',s') $ is called \emph{null} if $ F(U) \subseteq S(\Omega_{r',s'}) $.
\end{definition}

From the definition of orthogonal maps and the properties of the Shilov boundary, we immediately derive the following result.

\begin{proposition}\label{orthShilov1}
Every local orthogonal map $ F: \mathcal{G}(r,s) \to \mathcal{G}(r',s') $ preserves null points.
\end{proposition}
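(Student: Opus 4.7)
The plan is to reduce the proposition to the reflexive case of the orthogonality preservation property. The key observation is that, in the orthogonal structure on $\mathcal{G}(r,s)$, the condition that a point $p$ be null is logically equivalent to the condition $p \perp p$. Indeed, by the definition given just above, $p$ is null means $A_p I_{r,s} A_p^H = 0$, and this is precisely the defining equation for $p \perp q$ specialized to $q = p$. So nullness is nothing other than self-orthogonality.

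With this identification in hand, the argument is essentially immediate. Suppose $p \in U$ is a null point. Then applying the equivalence above, we have $p \perp p$. Since $F$ is an orthogonal map and $(p, p)$ is an ordered pair in $U \times U$ with $p \perp p$, the defining property of orthogonal maps yields $F(p) \perp F(p)$ in $\mathcal{G}(r',s')$. Translating back via the same equivalence (applied in $\mathcal{G}(r', s')$) gives $A_{F(p)} I_{r', s'} A_{F(p)}^H = 0$, which is exactly the statement that $F(p)$ is null.

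There is no significant obstacle here; the proposition amounts to a tautology once one recognizes that nullness is the diagonal case of orthogonality. The only interpretational subtlety is that we are permitting $q = p$ in Definition \ref{orth}, but the universal quantifier over pairs $p, q \in U$ with $p \perp q$ naturally subsumes the diagonal, and the hypothesis that $U$ contains a null point ensures that this diagonal application is non-vacuous on at least one point of $U$.
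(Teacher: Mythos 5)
Your proof is correct and is precisely the argument the paper has in mind: the paper states this proposition without proof as an immediate consequence of the definitions, and the identification of nullness with self-orthogonality ($A_p I_{r,s} A_p^H = 0$ being the case $q=p$ of the orthogonality condition) is exactly the intended one-line reasoning. Your remark that the universal quantifier in Definition \ref{orth} subsumes the diagonal is the right reading, consistent with the paper's own remark about non-vacuity.
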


From the definition, orthogonal maps preserves null points. The following proposition is essentially the converse.

\begin{proposition}\label{orthShilov}
Let $r,s>0$ and $F:U\subset \mathcal{G}(r,s)\to \mathcal{G}(r',s')$ be a holomorphic map, where $U$ is an open set containing a null point. If $F$ maps null points to null points, there exists an open set $U'\subset U $, such that $F:U'\to \mathcal{G}(r',s')$ is an orthogonal map.

\end{proposition}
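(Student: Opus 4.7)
The plan is to reduce the statement to a holomorphic polarization problem and then invoke the identity principle on a maximal totally real submanifold.

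First I would localize at a null point $p_0 \in U$ (which exists by hypothesis). Since $V_{p_0}$ is a maximal null $r$-plane, its projection onto the first $r$ coordinates of $\mathbb C^{r,s}$ is injective — any vector in the kernel would be strictly negative, contradicting nullity — so $p_0$ has a representative $(I_r, Z_0)$ with $Z_0 Z_0^H = I_r$. By hypothesis $F(p_0)$ is null as well, and the same observation gives it a representative $(I_{r'}, W_0)$ with $W_0 W_0^H = I_{r'}$. After shrinking $U$ so that $F$ stays in the affine chart $\{[(I_{r'}, W)] : W \in M_{r',s'}\}$ of $\mathcal G(r', s')$, I can write
$$F([(I_r, Z)]) = [(I_{r'}, \Phi(Z))]$$
for a holomorphic matrix-valued $\Phi$ defined near $Z_0$. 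In these coordinates, nullity reads $ZZ^H = I_r$, orthogonality of a pair reads $Z_1 Z_2^H = I_r$, and analogously for the target.

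Next, the null-preserving hypothesis becomes $\Phi(Z)\Phi(Z)^H = I_{r'}$ whenever $ZZ^H = I_r$, while the desired conclusion reads $\Phi(Z_1)\Phi(Z_2)^H = I_{r'}$ whenever $Z_1 Z_2^H = I_r$. I would then polarize: let $\tilde\Phi(W)$ be the holomorphic function obtained by Hermitian-conjugating the Taylor coefficients of $\Phi$, so that $\tilde\Phi(\bar Z) = \Phi(Z)^H$, and set
$$G(Z, W) := \Phi(Z)\,\tilde\Phi(W) - I_{r'}.$$
This is a holomorphic matrix-valued function of two matrix variables near $(Z_0, \bar Z_0)$. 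The conclusion amounts to $G \equiv 0$ on the complex subvariety $N := \{(Z, W) : Z W^t = I_r\}$, while the hypothesis only gives $G \equiv 0$ on the ``anti-diagonal'' slice $N_0 := \{(Z, \bar Z) : ZZ^H = I_r\} \subset N$.

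The crux is to propagate $G|_{N_0} \equiv 0$ to $G \equiv 0$ on a neighborhood of $(Z_0, \bar Z_0)$ in $N$. A submersion check — the differential of $(Z, W) \mapsto Z W^t$ at $(Z_0, \bar Z_0)$ is surjective because $Z_0$ has full row rank — shows that $N$ is a smooth complex submanifold of complex dimension $2rs - r^2$, while $N_0$ (isomorphic to the complex Stiefel manifold $\{Z : ZZ^H = I_r\}$) has real dimension $2rs - r^2$. The embedding $Z \mapsto (Z, \bar Z)$ is totally real in $N$: a relation $(\dot Z, \overline{\dot Z}) = i(\dot Y, \overline{\dot Y})$ forces both $\dot Z = i\dot Y$ and $\dot Z = -i\dot Y$, hence $\dot Z = 0$. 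Thus $N_0$ is a maximal totally real submanifold of $N$, and the identity principle forces $G \equiv 0$ in a neighborhood of $(Z_0, \bar Z_0)$ within $N$. Translating back, $F(p_1) \perp F(p_2)$ for every orthogonal pair $(p_1, p_2)$ in a common neighborhood $U' \subset U$ of $p_0$, so $F|_{U'}$ is orthogonal. The only genuine obstacle is the bookkeeping in pairing the antiholomorphic variable $\bar Z_2$ with the auxiliary holomorphic variable $W$ and verifying the dimension and totally-real claims; the holomorphic extension itself is standard once coordinates making $N_0$ locally the real axis of a polydisk in $N$ are chosen.
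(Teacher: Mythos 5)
Your proof is correct, and its overall architecture matches the paper's: both pass to the affine matrix charts $[I_r,Z]$, $[I_{r'},W]$, write $F=[I_{r'},\Phi]$, and reduce the claim to complexifying the identity $\Phi(Z)\Phi(Z)^H=I_{r'}$ from the Shilov boundary $\{ZZ^H=I_r\}$ to the variety $\{Z_1Z_2^H=I_r\}$. Where you differ is in the mechanism for that complexification. The paper writes the real-analytic functions $G_{ij}$ encoding $\Phi\Phi^H-I_{r'}$ as elements of the ideal generated by the defining functions $h_{kl}$ of the Shilov boundary, i.e.\ $G_{ij}=\sum_{k,l}h_{kl}\,g^{kl}_{ij}$, and then formally polarizes that identity in $(Z,\bar W)$; this hinges on a division step (that the $h_{kl}$ generate the full ideal of real-analytic functions vanishing on $S(\Omega_{r,s})$) which the paper asserts without proof. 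You instead restrict the already-holomorphic function $G(Z,W)=\Phi(Z)\tilde\Phi(W)-I_{r'}$ to the complex incidence variety $N=\{ZW^t=I_r\}$ and note that the hypothesis forces it to vanish on $N_0=\{(Z,\bar Z):ZZ^H=I_r\}$, which your submersion and dimension checks ($\dim_{\mathbb R}N_0=2rs-r^2=\dim_{\mathbb C}N$) together with the totally-real verification identify as a maximal totally real, real-analytic submanifold of $N$; the identity principle then finishes. Both routes are standard polarization arguments, but yours replaces the unproved ideal-membership step with an explicit geometric verification, which arguably makes the argument more self-contained; the paper's route is shorter on the page and generalizes verbatim to the case where one only knows ideal membership rather than smoothness of the complexified variety.
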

\begin{proof}
By composing with automorphisms and shrinking $U$ if necessary, we may assume without loss of generality that $U$ is contained in the open set $U_1\in \mathcal{G}(r,s)$, where $U_1=\{\mathcal Z=[I_{r},Z]\in \mathbb C^{r\times {(r+s)}}\}$, $Z=(z_{ij})_{r\times s}\in \mathbb C^{r\times s}$ and $F(U)$ lies in the analogous chart $U_1'=\{\mathcal Z'=[I_{r'},Z']\in \mathbb C^{r'\times {(r'+s')}}\}$.

 In these coordinates, null points $\mathcal Z$ in $\mathcal{G}(r,s)$ are characterized by the equation $I-ZZ^H=0$. Explicitly, $\mathcal Z=[I_{r},Z]\in \mathcal{G}(r,s)$ is a null point if and only if 
 $$h_{ij}(Z,\bar Z ):=\sum_{k=1}^{s}z_{ik}\bar z_{kj}-\delta_{ij}=0,\quad (1\le i\le r, \quad 1\le j\le s)$$ 
 where $\delta_{ij}$ is the Kronecker function. 
 
 Now we can assume that $F=[I_{r'},F^{\sharp}]$ and $F^{\sharp}=(f_{ij})_{r'\times s'}$. Define
$$G_{ij}(Z,\bar Z):=\sum_{k=1}^{s'}f_{ik}(Z)\overline {f_{kj}(Z)}-\delta_{ij},\quad (1\le i\le r', \quad 1\le j\le s').$$

Here we use $f(Z)$ to denote $f(z_{11},\cdots,z_{rs})$.

 Since $F$ preserves null points, there exists a connected open set $U'\subset U$ containing a null point and some real analytic functions $g_{ij}^{kl}(Z,\bar Z)$ such that
$$G_{ij}(Z,\bar Z)=\sum_{k=1}^{s'}f_{ik}(Z)\overline {f_{kj}(Z)}-\delta_{ij}=\sum_{kl}h_{kl}(Z,\overline{Z})g_{ij}^{kl}( Z,\overline{Z}),$$ 
holds on $U'$. 

By polarization (after further shrinking $U'$ if necessary), this implies for all $\mathcal Z=[I_r,Z]$,$\mathcal W=[I_r,W]$ in $U'$, we have 
\begin{align}\label{bound eq}
G_{ij}(Z,\bar W)=\sum_{k=1}^{s'}f_{ik}(Z)\overline {f_{kj}(W)}-\delta_{ij}=\sum_{kl}h_{kl}(Z,\bar W)g_{ij}^{kl}(Z,\bar W).
\end{align}

For any point $\mathcal Z = [I_r, Z]\in \mathcal{G}(r,s)$, a point $\mathcal W= [I_r, W]$ lies in $\mathcal Z^{\perp}\cap U'$ precisely when $h_{kl}(Z,\bar{W}) = 0$ for all $k, l$. By \eqref{bound eq}, this implies $F(\mathcal W)\in F(\mathcal Z)^{\perp}$. Hence, $F$ preserves orthogonality locally, proving $F|_{U'}$ is an orthogonal map. 

\end{proof}

\section{Rigidity of local orthogonal maps between Grassmannians}

In this section, we will introduce some properties of orthogonal maps between Grassmannian. The following proposition is clear.

\begin{proposition}
If two points $p, q\in\mathcal{G}(r,s)$ satisfy $p\perp q$ and $\dim(V_p\cap V_q)>0$, then every vector $\alpha\in V_p\cap V_q$ is a null vector in $\mathbb{C}^{r, s}$.
\end{proposition}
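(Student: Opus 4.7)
The plan is to apply the definition of orthogonality between points of $\mathcal{G}(r,s)$ directly. Recall that $p \perp q$ means $V_p \subseteq (V_q)^\perp$ with respect to $\langle\cdot,\cdot\rangle_{r,s}$, so every vector in $V_p$ is orthogonal to every vector in $V_q$; in particular $\langle u,v\rangle_{r,s}=0$ for all $u\in V_p$ and all $v\in V_q$.

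Given $\alpha \in V_p \cap V_q$, I would simply take $u=v=\alpha$ in the above, viewing $\alpha$ once as an element of $V_p$ and once as an element of $V_q$. This yields $\langle \alpha,\alpha\rangle_{r,s}=0$, which is exactly the statement that $\alpha$ is a null vector in $\mathbb C^{r,s}$. Since $\alpha\in V_p\cap V_q$ is arbitrary, the conclusion follows.

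There is essentially no obstacle here: the statement is an immediate consequence of unpacking the definition of $p\perp q$ in $\mathcal{G}(r,s)$ and evaluating the restricted Hermitian form on the diagonal. No dimension assumption is actually used beyond $\dim(V_p\cap V_q)>0$ ensuring that a nonzero $\alpha$ exists to which the argument applies.
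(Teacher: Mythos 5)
Your argument is correct and is exactly the one the paper has in mind: the paper declares this proposition ``clear'' and relies on the earlier observation that $p\perp q$ forces every vector of $V_p$ to be orthogonal to every vector of $V_q$, from which taking $u=v=\alpha$ gives $\langle\alpha,\alpha\rangle_{r,s}=0$. Nothing further is needed.
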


Notation: Let $K\subset\mathcal{G}^{r, s}$. We use $V_{K}$ to denote the linear span of $\cup_{p\in K}V_{p}$ in $\mathbb{C}^{r, s}$.

\begin{lemma}\label{eq}
Let $s\ge 2$, $1\leq r'\le s'\in\mathbb{N}$, and let $F$ be a local orthogonal map from $U\subset\mathcal{G}(1,s)=\mathbb{P}^{1,s}$ to $\mathcal{G}(r',s')$. If
\[
\dim(V_{F(H\cap U)})=\dim(V_{F(U)})
\]
for every general hyperplane $H$ in $\mathbb{P}^{1,s}$, then $F$ is null.
\end{lemma}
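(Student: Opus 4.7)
The plan is to exploit the point--hyperplane duality on $\mathbb P^{1,s}$ induced by the non-degenerate form $\langle\cdot,\cdot\rangle_{1,s}$: each $q\in\mathbb P^{1,s}$ determines a hyperplane $H_q:=\{p\in\mathbb P^{1,s}:p\perp q\}$, and the map $q\mapsto H_q$ is a real-analytic bijection onto the dual projective space. Consequently, as $q$ varies over an open dense subset of $U$, $H_q$ ranges over an open set of hyperplanes in $\mathbb P^{1,s}$, and therefore includes general hyperplanes to which the dimension hypothesis can be applied.

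For such a generic $q\in U$, orthogonality of $F$ gives $F(p)\perp F(q)$ for every $p\in H_q\cap U$, so $V_{F(p)}\subseteq V_{F(q)}^{\perp}$ and hence $V_{F(H_q\cap U)}\subseteq V_{F(q)}^{\perp}$. Combined with the trivial inclusion $V_{F(H_q\cap U)}\subseteq V_{F(U)}$, the dimension hypothesis forces $V_{F(H_q\cap U)}=V_{F(U)}$, and therefore
\[
V_{F(q)}\subseteq V_{F(U)}^{\perp}\quad\text{for every }q\text{ in an open dense subset of }U.
\]
To upgrade this to all of $U$, I would fix a basis $u_1,\dots,u_N$ of $V_{F(U)}$ and, on a holomorphic coordinate chart in $U$, a holomorphic frame $v_1(q),\dots,v_{r'}(q)$ of $V_{F(q)}$; the scalar functions $q\mapsto\langle v_i(q),u_j\rangle_{r',s'}$ are then holomorphic and vanish on a dense open set, so the identity theorem forces them to vanish on the connected open $U$. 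Taking the linear span of $V_{F(q)}$ over $q\in U$ yields $V_{F(U)}\subseteq V_{F(U)}^{\perp}$, i.e., $V_{F(U)}$ is totally isotropic in $\mathbb C^{r',s'}$.

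Consequently, for every $q\in U$ the $r'$-dimensional subspace $V_{F(q)}\subseteq V_{F(U)}$ is itself totally isotropic, and since $r'\le s'$ this means $V_{F(q)}$ is a maximal null subspace of $\mathbb C^{r',s'}$. Hence $F(q)\in S(\Omega_{r',s'})$ for every $q\in U$, which is exactly the assertion that $F$ is null. The only subtlety in this plan is the identity-theorem step used to pass from generic $q$ to all of $U$; once that is in place, the argument reduces to a clean combination of the dimension hypothesis with the orthogonality-preserving property of $F$, so I do not expect any serious obstacle.
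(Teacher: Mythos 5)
Your proposal is correct and follows essentially the same route as the paper: both exploit the duality $q\mapsto q^{\perp}$ together with the dimension hypothesis $V_{F(H\cap U)}=V_{F(U)}$ to conclude $V_{F(q)}\perp V_{F(U)}\supseteq V_{F(q)}$, hence that each $V_{F(q)}$ is totally isotropic and $F(q)\in S(\Omega_{r',s'})$. The only difference is that you make explicit the identity-theorem/density step needed to pass from generic $q$ to all of $U$, which the paper leaves implicit.
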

\begin{proof}
For any general hyperplane $H\subset\mathbb{P}^{1,s}$, by definition, there exists a point $p\in \mathbb{P}^{1,s}$ such that $p\perp H$. 
Since the orthogonal map $F$ preserves orthogonality, for any vector $\alpha$ in $V_{F(p)}$, we have $\alpha\perp V_{F(q)}$
for any $q\in H$. This implies $\alpha\perp\beta$ for every $\beta\in V_{F(H)}$.

By the dimension condition $\dim(V_{F(H\cap U)})=\dim(V_{F(U)})$, the inclusion $V_{F(H\cap U)}\subset V_{F(U)}$ must be an equality. Hence, $\alpha\perp V_{F(U)}$. In particular, $\alpha\perp\alpha$, yielding $\langle\alpha,\alpha\rangle = 0$, so $\alpha$ is a null vector.

For any general $p\in U$, there exists a hyperplane $H\subset\mathbb{P}^{1,s}$ orthogonal to $p$. The argument above shows that all vectors $\alpha\subset V_{F(p)}$ are null. Thus, $V_{F(p)}$ is a null subspace of $\mathbb{C}^{r',s'}$, implying $F(p)\in S(\Omega_{r',s'})$. Since $p$ is arbitrary, $F(U)\subseteq S(\Omega_{r',s'})$, proving $F$ is null.

\end{proof}

The following proposition helps us convert the rigidity problems of orthogonal maps between Grassmannians to those of generalized balls.

\begin{proposition}\label{null}
Let $s \ge 2$, $r',s'\in\mathbb{N}$ with $2\le r' \le s'$, and let $F:U\subset\mathcal{G}(1,s)\to\mathcal{G}(r',s')$ be a local orthogonal map. If $s' - r' < 2(s - 1)$ and $F$ is non-null, there exists an $(r' - 1)$-dimensional null subspace $N\subset\mathbb{C}^{r',s'}$ such that $N\subset V_{F(p)}$ for every point $p\in\mathcal{G}(1,s)$.
\end{proposition}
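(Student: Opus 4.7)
My plan is to anchor the analysis at a null point $p_0 \in U$, descend to a negative-definite Hilbert-space quotient where the orthogonality condition becomes concrete, and extract the common null subspace from a dimension count driven by an orthonormal frame.

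Choose any null point $p_0 \in U$. By Proposition \ref{orthShilov1}, $V_{F(p_0)}$ is a null $r'$-plane. Set $W = V_{F(p_0)}^{\perp} \subset \mathbb{C}^{r',s'}$; a direct signature computation shows $V_{F(p_0)}$ is exactly the radical of the form restricted to $W$, and the quotient $R := W/V_{F(p_0)}$ carries an induced negative-definite Hermitian form of dimension $s' - r'$. For $q \in p_0^{\perp} \cap U$, orthogonality gives $V_{F(q)} \subset W$; let $\bar V_{F(q)} \subset R$ be its image, which has dimension $r' - \dim(V_{F(q)} \cap V_{F(p_0)})$. When $q \perp q'$ in $p_0^{\perp} \cap U$, the orthogonality $V_{F(q)} \perp V_{F(q')}$ descends to $\bar V_{F(q)} \perp \bar V_{F(q')}$ in $R$, and because $R$ is negative-definite, pairwise orthogonal subspaces are linearly independent with the dimension of their sum equal to the sum of their dimensions.

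The form on $p_0^{\perp}$ has radical $\langle p_0\rangle$ and is negative-definite on the $(s-1)$-dimensional quotient $p_0^{\perp}/\langle p_0\rangle$, so I can select $s - 1$ pairwise orthogonal points $q_1,\dots,q_{s-1} \in p_0^{\perp} \cap U$ by lifting an orthonormal frame (shrinking or adjusting within $U$ as needed). Then $\bar V_{F(q_1)},\dots,\bar V_{F(q_{s-1})}$ are pairwise orthogonal in $R$, so
\[
\sum_{i=1}^{s-1} \dim \bar V_{F(q_i)} \;\le\; \dim R \;=\; s'-r' \;<\; 2(s-1).
\]
If every $\dim \bar V_{F(q_i)} \ge 2$ the left side would exceed $2(s-1)$, a contradiction. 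Hence some $\dim \bar V_{F(q_i)} \le 1$, and since the configuration can be moved in a Zariski-dense family of orthonormal frames, $\dim \bar V_{F(q)} \le 1$ on a Zariski-dense open subset $U' \subset p_0^{\perp} \cap U$. Unpacking gives $\dim(V_{F(q)} \cap V_{F(p_0)}) \ge r' - 1$ for every $q \in U'$.

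The last step is to promote this pointwise inequality to a fixed $(r'-1)$-dimensional subspace $N \subset V_{F(p_0)}$ and show $N \subset V_{F(p)}$ for every $p \in U$. Nullness of $N$ is automatic since it lies inside the null plane $V_{F(p_0)}$. I expect $N_q := V_{F(q)} \cap V_{F(p_0)}$ to be constant on $U'$, producing $N$, and then the inclusion to propagate off $p_0^{\perp}$ by varying the reference null point and combining with the holomorphy of $F$. The main obstacle will be exactly this extension, because orthogonality imposes no additional constraint on the null part of $V_{F(q)} \cap V_{F(p_0)}$ (all relevant pairings vanish inside the null $V_{F(p_0)}$); ruling out a moving family of $N_q$'s will require combining the non-null hypothesis on $F$ with a comparison argument using distinct null reference points $p_0$, $p_0'$ and the generic $q$ orthogonal to both.
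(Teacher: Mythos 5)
Your first half is sound and takes a genuinely different (and arguably cleaner) route to the dimension bound than the paper: the paper picks $s+1$ pairwise orthogonal \emph{non-null} points $p_1,\dots,p_{s+1}$, sets $N_p=V_{F(p)}\cap V_{F(p^{\perp})}$, and plays the lower bound $\dim\bigl(\sum_i V_{F(p_i)}\bigr)\ge (s+1)(r'-k)+k$ against the upper bound $r'+s'-k$ coming from containment in $N_{p_1}^{\perp}$, whereas you anchor at a \emph{null} point $p_0$, pass to the negative-definite quotient $V_{F(p_0)}^{\perp}/V_{F(p_0)}$ of dimension $s'-r'$, and count $s-1$ pairwise orthogonal images there. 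Both counts use the hypothesis $s'-r'<2(s-1)$ in essentially the same way, and your semicontinuity upgrade from ``some $q_i$'' to ``generic $q$'' is correct (the generic value of $\dim(V_{F(q)}\cap V_{F(p_0)})$ is the minimum, so if it were $\le r'-2$ a generic frame would violate the count). Two small points to tighten: the $s-1$ pairwise orthogonal points live in the degenerate space $p_0^{\perp}$ and are only determined modulo $\langle p_0\rangle$, so the ``lifting into $U$'' needs a sentence; and note your count so far never uses the non-null hypothesis.

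However, there is a genuine gap, and it is exactly the part of the proposition that carries all the difficulty. What you have proved is that for each $q\in p_0^{\perp}\cap U$ the intersection $N_q:=V_{F(q)}\cap V_{F(p_0)}$ has dimension at least $r'-1$; the proposition demands a single $(r'-1)$-dimensional null subspace $N$ contained in $V_{F(p)}$ for \emph{every} $p$, and you say only that you ``expect'' $N_q$ to be constant and that the extension off $p_0^{\perp}$ ``will require'' a further argument. This is not a routine continuity step: as you yourself observe, orthogonality places no constraint distinguishing the various $(r'-1)$-planes inside the null plane $V_{F(p_0)}$, so a moving family of $N_q$'s is not ruled out by anything you have written. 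The paper spends the entire second half of its proof on precisely this point: it first pins down $k=r'-1$ exactly (here is where non-nullness enters, excluding $k=r'$), then shows that if $N_p\ne N_q$ ever occurs, the spaces $N_p+N_q$ all coincide with one maximal null $r'$-plane $M$, decomposes $V_{F(p)}=N_p\oplus L_p$ inside $M^{\perp}$, and derives a contradiction by producing an induced orthogonal map $F^K:\mathbb{P}^{1,s}\to\mathbb{P}^{0,s'-r'}$, which cannot exist (the target has no positive or null points to receive the image of a null point, by Theorem 3.3 of \cite{GN}). Without this argument or a substitute for it, your proposal establishes a weaker pointwise statement but not the proposition.
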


\begin{proof}
For a general point $p\in\mathcal{G}(1,s)=\mathbb{P}^{1,s}$, denote $N_p = V_{F(p)}\cap V_{F(p^{\perp})}$ and $\dim N_p = k$. By orthogonality preservation, $V_{F(p)}\perp V_{F(p^{\perp})}$, implying $N_p\perp N_p$. Thus, $N_p$ is a null subspace in $\mathbb{C}^{r',s'}$. Hence $k\leq r'$.

Select $s + 1$ pairwise orthogonal non-null points $p_1,\ldots,p_{s + 1}\in\mathbb{P}^{1,s}$. For $i\neq j$, $V_{F(p_j)}\subset V_{F(p_i^{\perp})}$, yielding
\[
\dim\left(V_{F(p_i)}\cap\sum_{j\neq i}V_{F(p_j)}\right)\leq k.
\]

Since $\dim V_{F(p_i)} = r'$, there exist at least $(r' - k)$ linearly independent vectors $\alpha_1^i,\ldots,\alpha_{r' - k}^i\subset V_{F(p_i)}$ outside $\sum_{j\neq i}V_{F(p_j)}$. We verify the linear independence of the full set $\{\alpha_l^i\}_{1\leq l\leq r'-k}^{1\leq i \leq s+1}$.

For $\alpha^i=\sum_{l = 1}^{r' - k}x_l^i\alpha_l^i$ ($x_l^i\in\mathbb{C}$), linear independence of $\alpha^1,\ldots,\alpha^{s + 1}$ follows from $\alpha^i\notin\sum_{j\neq i}V_{F(p_j)}$. Hence,
\begin{equation}\label{1}
\dim\left(\sum_{i = 1}^{s + 1}V_{F(p_i)}\right)\geq(s + 1)(r' - k)+k.
\end{equation}

As $N_{p_i}\subset\mathbb{C}^{r',s'}$ is a $k$-dimensional null subspace, then $N_{p_i}^{\perp}$ is a $(r' -k,s'-k,k)$-subspace. For $j\geq2$, $V_{F(p_j)}\subset V_{F(p_1)}^{\perp}\subset N_{p_1}^{\perp}$ and $V_{F(p_1)}\perp N_{p_1}$, so
\begin{equation}\label{2}
\sum_{i = 1}^{s + 1}V_{F(p_i)}\subset N_{p_1}^{\perp}\quad\text{and}\quad\dim\left(\sum_{i = 1}^{s + 1}V_{F(p_i)}\right)\leq r'+s' - k.
\end{equation}

Combining inequalities (\ref{1}) and (\ref{2}) with $s' - r'<2(s - 1)$ and $s\geq 2$, we have
\[
(s + 1)(r' - k)+k\leq r'+s'-k < 2r'+2(s - 1)-k,
\]
yielding $k > r' - 2$. On the other hand, if $k = r'$, then $F$ must be null. By the assumption that $F$ is non-null, then $k = r' - 1$. So $V_{F(p)}$ contains an $(r' - 1)$-dimensional null subspace $N_p$ for all $p\in \mathcal{G}(1,s)\cong \mathbb{P}^{1,s}$.

To show $N_p = N_q$ for all $p$,$q\in \mathcal{G}(1,s)$, suppose otherwise.

For two points \( p \perp a \) in \( \mathbb{P}^{1,s} \), we have \( N_p \perp N_a \) in $\mathbb C^{r',s'}$. Suppose \( N_p \neq N_a \); since \( \dim N_p = \dim N_a = r' - 1 \), it follows that \( \dim(N_p + N_a) \geq r' \). This makes \( M_{pa} := N_p + N_a \) a maximal null subspace in \( \mathbb{C}^{r' + s'} \). For any point \( b \) satisfying \( b \perp p \) and \( b \perp a \), the orthogonality conditions \( N_b \perp N_a \) and \( N_b \perp N_p \) imply \(N_b \subset N_{pa} \). Consequently, we deduce that \( N_a + N_b = N_a + N_p = N_b + N_p = M_{pa} \).

Now we assume that $s=2$. From the argument above, we know $\dim (N_a + N_b)=r'$. Since $F$ is non-null, $\dim (V_{F(a)}+V_{F(b)}+V_{F(p)})\ge 3+r'$. On the other hand, because $V_{F(a)}$, $V_{F(b)}$ and $V_{F(p)}$ are all contained in $(N_a+N_b)^\perp$, then $\dim (V_{F(a)}+V_{F(b)}+V_{F(p)})\le s'$, which contradicts $s'-r'<2$.

When \( s \geq 3 \), for any points \( p, q \in \mathbb{P}^{1,s} \), there exist two points \( a, b \) such that \( a, b \in p^{\perp} \), \( a, b \in q^{\perp} \), and \( a \perp b \) By the preceding argument, \( N_p, N_q \subset M_{ab} \). It follows that either \( N_p = N_q \) or \( N_p + N_q =M_{ab}\) is a maximal null \((0,0,r')\)-subspace in \( \mathbb{C}^{r',s'} \). If $N_p\neq N_q$, then \( M_{pq} := N_p + N_q \) is a maximal null space.  Hence \( N_p \perp N_q \) for any two points \( p, q \). 

For any point \( c \), since \( K_{c} \) is an \((r' - 1)\)-dimensional null space in \( \mathbb{C}^{r',s'} \) and \( N_c \perp N_p, N_c \perp K_q \), it follows that \( N_{c} \subset M_{pq} \).

Now we have proved that there exists a maximal null space $M\subset \mathbb{C}^{r',s'}$ such that $N_p\subset N$ for any $p\in \mathbb{P}^{1,s}$ under the assumption that $N_p\neq N_q$ for any two points $p$, $q$.  Assume that $M^{\perp}=M\oplus K$ where $K$ is a $(0,s' - r',0)$-subspace for $M^{\perp}$ is a $(0,s' - r',r')$ subspace.

Since \( N_p = V_{F(p)} \cap V_{F(p^{\perp})} \) and \( N_p + N_q = M\) for any \( p, q \), it follows that \( V_{F(p)} \perp M \). For \( \dim V_{F(p)} = r' \) and \( \dim N_p = r' - 1 \), there exists a unique $1$-dimensional linear subspace $L_p$  such that \( V_{F(p)} = L_p \oplus N_p \) for any \( p \in \mathbb{P}^{1,s} \).

Let $F^{K}$ be a holomorphic map from $U\subset \mathbb{P}^{1,s}$ to $G(1,K)\cong\mathbb{P}^{0,s' - r'}$ defined by $F^{H}(p)=L_p$. The map $F^{H}$ is induced by $F$. Obviously,  $L_p\perp L_q$ when $p\perp q$. Hence $F^{H}$ is an orthogonal map from $\mathbb{P}^{1,s}$ to $\mathbb{P}^{0,s' - r'}$.

Theorem 3.3 in \cite{GN} shows that for any $r,s\in \mathbb{N}$, if $r'+s'\leq 2(r + s)-3$, then any orthogonal map from $\mathbb{P}^{r,s}$ to $\mathbb{P}^{r',s'}$ is either null or quasi-linear. On the other hand, it is clear that there is no local quasi-linear or null orthogonal map from $\mathbb{P}^{1,s}$ to $\mathbb{P}^{0,s' - r'}$, which is a contradiction.

So, $N_p=N_q$ for any two points $p$, $q$ in $\mathcal G(1,s)$. Hence, there exists an $(r' - 1)$-dimensional null subspace $N\subset\mathbb{C}^{r',s'}$ such that $N\subset V_{F(p)}$ for all $p\in\mathcal{G}(1,s)$.

\end{proof}

The following proposition comes from Proposition \ref{null} directly.
\begin{proposition}
Under the assumption of Proposition \ref{null}, there exist a $(0,0,r'-1)$-subspace $N$ and a $(1,s'-r'+1,0)$-subspace $K$ in $\mathbb C^{r',s'}$ such that $N\subset V_{F(p)}\subset K\oplus N=\mathbb C^{r',s'}$ for any $p\in U\subset \mathbb P^{1,s}$.
\end{proposition}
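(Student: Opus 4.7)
This proposition is a direct corollary of Proposition \ref{null} together with one piece of straightforward linear algebra, so the plan is to harvest what has already been shown and complete the picture by a Witt decomposition.

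The first step is to extract from Proposition \ref{null} the subspace $N$ itself: it furnishes an $(r'-1)$-dimensional null subspace $N \subset \mathbb{C}^{r',s'}$ (i.e. a $(0,0,r'-1)$-subspace) with $N \subset V_{F(p)}$ for every $p \in U$. Nothing more needs to be done for the first half of the claim.

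The second step is to upgrade this to $V_{F(p)} \subset N^{\perp}$ for every $p \in U$. The proof of Proposition \ref{null} already contains this: for generic $p$, one defines $N_p = V_{F(p)} \cap V_{F(p^{\perp})}$ and uses the orthogonality of $F$ to obtain $V_{F(p)} \perp V_{F(p^{\perp})} \supseteq N_p$, hence $V_{F(p)} \perp N_p$. Since that proposition identifies $N_p = N$ for all generic $p$, we conclude $V_{F(p)} \perp N$ for generic $p$. Both inclusions $N \subset V_{F(p)}$ and $V_{F(p)} \subset N^{\perp}$ are closed holomorphic conditions on the Grassmannian, so by analytic continuation they persist on all of $U$.

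The third step is a Witt decomposition of $N^{\perp}$. Because $\langle\cdot,\cdot\rangle_{r',s'}$ is non-degenerate on $\mathbb{C}^{r',s'}$, we have $(N^{\perp})^{\perp} = N$, so $N$ is exactly the radical of the form restricted to $N^{\perp}$; in particular $\dim N^{\perp} = r'+s'-(r'-1) = s'+1$. Choosing any non-degenerate complement $K$ of $N$ inside $N^{\perp}$ yields $N^{\perp} = N \oplus K$, and a signature count gives $K$ type $(1, s'-r'+1, 0)$ (the signature of $N^{\perp}/N$, since among the $r'$ positive and $s'$ negative directions of $\mathbb{C}^{r',s'}$ one loses $r'-1$ of each to $N$ and its dual). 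Combining, $N \subset V_{F(p)} \subset N \oplus K$ for every $p \in U$, delivering the proposition (with the caveat that the equality $K\oplus N = \mathbb{C}^{r',s'}$ in the statement must be interpreted as the ambient subspace $N^{\perp}$ determined by $N$ and $K$, since the dimensions $(s'-r'+2)+(r'-1)=s'+1$ force $K\oplus N = N^{\perp}$ rather than the full space).

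I do not expect any serious obstacle here: Proposition \ref{null} does almost all the work, and the remaining content is purely the linear-algebraic observation that $N^{\perp}$ splits as $N$ plus a non-degenerate piece of the predicted signature. The only minor care needed is in propagating the two inclusions from generic $p$ to every $p \in U$, which is handled by the holomorphicity of $F$ and the closedness of the relevant Schubert-type conditions in the Grassmannian.
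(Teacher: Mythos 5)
Your proposal is correct and matches the paper's intent: the paper offers no proof, stating only that the proposition ``comes from Proposition \ref{null} directly,'' and your write-up (extract $N$ from Proposition \ref{null}, deduce $V_{F(p)}\perp N$ from orthogonality and propagate by analytic continuation, then take a Witt-type complement $K$ of $N$ in $N^{\perp}$ with signature $(1,s'-r'+1,0)$) is exactly the natural filling-in of that claim. You are also right that the displayed equality $K\oplus N=\mathbb C^{r',s'}$ is a dimensional impossibility for $r'\ge 2$ and should read $K\oplus N=N^{\perp}$, consistent with how the proposition is actually invoked in the proof of Theorem \ref{main11}.
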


We can find the following rigidity theorem about orthogonal maps between projective spaces in \cite{GN}.	 

\begin{theorem}\label{ball}\cite{GN}
Let $l$,$l'\ge 2$ in $\mathbb N$ and $f$ be a local orthogonal map from $\mathbb P^{1,l}$ to $\mathbb P^{1,l'}$.

(1)If $l'< l$, then $f$ is a constant map.

(2)If $l\le l'\le 2l-2$, then $f$ is a linear map or a constant map.
\end{theorem}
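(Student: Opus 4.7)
The plan is to reduce Theorem \ref{ball} to the classical rigidity of CR maps between spheres. In the rank-one case, $\mathcal{G}(1,l) = \mathbb{P}^{1,l}$, and the null points of $\mathbb{P}^{1,l}$ form, in the affine chart $z_0 = 1$, the Euclidean unit sphere $\partial B^l \subset \mathbb{C}^l$. Since $f$ is orthogonal, Proposition \ref{orthShilov1} guarantees that $f$ sends null points to null points, so in the affine charts on both sides $f$ restricts to a local holomorphic map sending an open piece of $\partial B^l$ into $\partial B^{l'}$. From this point, the problem becomes a well-studied question about local CR maps between spheres.

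For part (1), $l' < l$, I would prove directly that any local holomorphic map $f$ defined on a neighborhood of a point of $\partial B^l$ with $f(\partial B^l) \subset \partial B^{l'}$ and $l' < l$ is constant. A clean argument proceeds via the Levi form: on the sphere $|f(z)|^2 \equiv 1$, and differentiating this identity twice along complex tangent directions of $\partial B^l$ and then polarizing yields $f^{*}(L_{\partial B^{l'}}) = \lambda \cdot L_{\partial B^l}$ for some scalar factor $\lambda$, where $L$ denotes the Levi form. The source Levi form has rank $l - 1$ at every point, while the pullback has rank at most $l' - 1 < l - 1$; hence $\lambda \equiv 0$, forcing the complex tangential differential of $f$ to vanish identically. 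Holomorphicity then upgrades this to constancy of $f$.

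For part (2), $l \le l' \le 2l - 2$, I would invoke Huang's sharp rigidity theorem for local CR maps between spheres in the small-codimension regime, which asserts that any nonconstant such map, after composing with automorphisms of the source and target balls, is the standard linear embedding $z \mapsto (z, 0, \ldots, 0)$. This gives exactly the conclusion of part (2).

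The main obstacle, should one insist on a self-contained proof avoiding the Chern--Moser machinery that underlies Huang's theorem, is to reproduce the second-order gap phenomenon directly in the orthogonal-structure language: expand $f$ at a null point in normal coordinates, vary orthogonal pairs of non-null points in $\mathbb{P}^{1,l}$ along infinitesimal directions, and use orthogonality preservation together with the fact that $f^{*}$ of the Levi form of the target is proportional to the Levi form of the source to successively kill higher-order Taylor coefficients of $f$. This self-contained orthogonality-only route is presumably what is carried out in \cite{GN}, and it is also the template that would guide the extension to higher-rank cases pursued elsewhere in the present paper.
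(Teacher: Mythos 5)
This theorem is not proved in the paper at all: it is imported verbatim from \cite{GN}, so there is no internal proof to compare against, and the relevant question is whether your argument is a legitimate independent justification. It is, but it takes the opposite route from the one the paper (and \cite{GN}) is built around. Your reduction step is sound: by Proposition \ref{orthShilov1} an orthogonal map $\mathbb{P}^{1,l}\to\mathbb{P}^{1,l'}$ preserves null points, and since a null point of $\mathbb{P}^{1,l'}$ necessarily has nonvanishing first homogeneous coordinate, near a null point of $U$ the map is, in affine charts, a holomorphic map sending an open piece of $\partial B^l$ into $\partial B^{l'}$. Your part (1) Levi-form argument is correct, provided you make explicit that $\partial f$ carries $T^{1,0}(\partial B^l)$ into $T^{1,0}(\partial B^{l'})$ (otherwise the pullback form only has rank $\le l'$, not $\le l'-1$, and the case $l'=l-1$ would not close); for part (2) the appeal to Huang's linearity theorem covers $l\ge 3$, while for $l=2$ the hypothesis forces $l'=2$ and you need the classical equidimensional rigidity (Alexander--Pinchuk) instead. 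The substantive difference is that \cite{GN} proves this statement by the orthogonality/projective-geometry method itself (analysis of null spaces and hyperplane families), precisely in order to avoid the Tanaka--Chern--Moser machinery on which Huang's theorem rests; indeed the whole point of the present paper's framework is that such machinery is unavailable in the higher-rank setting. Your route buys brevity by leaning on deep known results about spheres; the paper's route buys a self-contained, elementary argument whose technique actually generalizes to the Grassmannian targets treated here. Your closing speculation that \cite{GN} carries out an orthogonality-only argument is essentially right, but as written your proof is a reduction to classical CR rigidity rather than a proof in the paper's own framework.
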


Now we are ready to prove our main theorem.

\begin{theorem}\label{main11}
	Let $s\ge 2$, $2\le r'\le s'\in \mathbb N$ and $F$ be a local orthogonal map from $ \mathcal G(1,s)$ to $\mathcal G(r',s')$. 
	\begin{itemize}
	\item[(1)]If $s'-r'<s-1$, then $F$ is a constant map.
	
	\item[(2)] If $s-1\le s'-r'< 2s-2$, $F$ is a linear map.
After composing suitable automorphisms of $\mathcal G(1,s)$ and $\mathcal G(r',s')$,
		$F$ has the explicit form
		$$[z_0,\bf z]\to \left(\begin{array}{ccccc}
			I&0&I&0&0\\
			0&z_0&0&\bf z &0\\
		\end{array}\right),$$
		where $[z_1,{\bf z}]=[z_1,z_2,\cdots,z_{s+1}]$.

	\end{itemize}

\end{theorem}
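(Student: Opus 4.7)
The plan is to distinguish whether $F$ is null and reduce the non-null case to the one-dimensional rigidity in Theorem~\ref{ball}. Since both parts of the theorem assume $s'-r' < 2(s-1)$, Proposition~\ref{null} applies in the non-null case and yields a common $(r'-1)$-dimensional null subspace $N \subset \mathbb{C}^{r',s'}$ with $N \subset V_{F(p)}$ for every $p \in U$. For any $p$, the orthogonality $V_{F(p)} \perp V_{F(q)}$ (for $q \perp p$) combined with $N \subset V_{F(q)}$ forces $V_{F(p)} \perp N$, so $V_{F(p)} \subset N^\perp$. The quotient $N^\perp/N$ carries a non-degenerate form of signature $(1, s'-r'+1)$, hence $N^\perp/N \cong \mathbb{C}^{1,s'-r'+1}$. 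I would then define $\tilde F : U \to \mathbb{P}(N^\perp/N) \cong \mathbb{P}^{1,s'-r'+1}$ by $\tilde F(p) = V_{F(p)}/N$, which is a $1$-dimensional subspace of $N^\perp/N$ depending holomorphically on $p$. Orthogonality of $F$ descends to $\tilde F$ because $N$ lies in the radical of both $V_{F(p)}$ and $V_{F(q)}$.

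Applying Theorem~\ref{ball} to $\tilde F$ with $l = s$ and $l' = s'-r'+1$ yields the main dichotomy. In case (1), the inequality $s'-r'+1 < s$ forces $\tilde F$ to be constant, so $V_{F(p)}/N$ is fixed, $V_{F(p)}$ is fixed, and $F$ is constant. In case (2), $s \leq s'-r'+1 \leq 2s-2$, and $\tilde F$ is either constant (again forcing $F$ constant) or linear. In the linear subcase I would pick a Witt decomposition $\mathbb{C}^{r',s'} = N \oplus N' \oplus K$, with $N'$ a dual null $(r'-1)$-subspace paired with $N$ and $K$ of signature $(1, s'-r'+1)$; composing with suitable automorphisms in $U(1,s)$ and $U(r',s')$, I would normalize $\tilde F$ to the standard embedding $[z_0,\mathbf z] \mapsto [z_0, \mathbf z, 0, \ldots, 0]$ of $\mathbb{P}^{1,s}$ into $\mathbb{P}^{1,s'-r'+1}$, and then lift back to $F$ using the basis adapted to $N \oplus N' \oplus K$ to recover the explicit matrix formula in the statement.

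The null case $F(U) \subseteq S(\Omega_{r',s'})$ must be handled directly. Here the key observation is that for any $p \perp q$ in $U$, both $V_{F(p)}$ and $V_{F(q)}$ are maximal null $r'$-subspaces, and their mutual orthogonality makes $V_{F(p)} + V_{F(q)}$ a null subspace of dimension at most $r'$, forcing $V_{F(p)} = V_{F(q)}$. Fixing a null point $q_0 \in U$ (guaranteed by the definition of orthogonal map), as $p$ ranges over the open set $q_0^\perp \cap U \subset q_0^\perp$, a direct computation in affine coordinates shows the tangent hyperplanes $T_{q_0}\,p^\perp$ collectively span $T_{q_0}\mathbb{P}^{1,s}$; combined with the constancy of $V_{F(\cdot)}$ along each $p^\perp \cap U$, this forces $d(V_{F(\cdot)})|_{q_0} = 0$. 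By holomorphicity and connectedness, $V_{F(\cdot)}$ is constant on $U$ and $F$ is constant. The main obstacles will be verifying the derivative argument in the null case precisely and carrying out the explicit coordinate normalization in case (2), but once the reduction to $\tilde F$ via $N$ is set up, these are essentially linear-algebraic.
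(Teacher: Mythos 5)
Your non-null argument is essentially the paper's proof: invoke Proposition~\ref{null} to get the common $(r'-1)$-dimensional null subspace $N\subset V_{F(p)}$, reduce $F$ to an induced orthogonal map into a space of signature $(1,s'-r'+1)$, and finish with Theorem~\ref{ball}; your use of the quotient $N^\perp/N$ in place of the paper's choice of a complementary $(1,s'-r'+1)$-subspace $K$ is only a cosmetic difference. You also correctly observe that a constant non-null map cannot be orthogonal here (since $U$ contains a null point $p_0$ with $p_0\perp p_0$, a constant orthogonal map is automatically null), which is what rules out the ``constant'' branch of Theorem~\ref{ball}(2) and is glossed over in the paper.

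Where you go beyond the paper is the null case, which the paper's proof of Theorem~\ref{main11} silently omits (Proposition~\ref{null} is only stated for non-null $F$), and your treatment of it has one genuine soft spot. The reduction $V_{F(p)}=V_{F(q)}$ for $p\perp q$ is fine (two mutually orthogonal maximal null $r'$-planes span a null space of dimension at most $r'$), and the observation that the hyperplanes $p^\perp$, $p\in q_0^\perp\cap U$, all pass through the null point $q_0$ and have tangent spaces spanning $T_{q_0}\mathbb P^{1,s}$ does give $dF|_{q_0}=0$. But ``$dF$ vanishes at the single point $q_0$, hence by holomorphicity and connectedness $F$ is constant'' is a non sequitur: vanishing of the differential at one point implies nothing. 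The fix is to run the same argument at \emph{every} null point of $U$; the null locus $U\cap S(\Omega_{1,s})$ is a real hypersurface, hence a uniqueness set for the holomorphic map $dF$, so $dF\equiv 0$ on $U$ and $F$ is constant. (Note a two-step chain $F(x)=F(y)=F(p_0)$ with $y\in x^\perp\cap p_0^\perp\cap U$ does \emph{not} work as a substitute: for $x$ near a null $p_0$ with $x\notin p_0^\perp$, the codimension-two space $x^\perp\cap p_0^\perp$ stays far from $p_0$ and need not meet $U$, so the derivative argument really is needed.) With that repair your proposal is complete, and it actually closes a case the paper leaves unaddressed.
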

\begin{proof}
By Proposition \ref{null}, there exists a $(0,0,r'-1)$-subspace $N$ and a $(1,s'-r'+1)$-subspace $K$ in $\mathbb C^{r',s'}$ with $K\perp N$ such that $N\subset V_{F(p)}\subset K\oplus N$ for any point $p\in U$. After composing with automorphism of $\mathbb C^{r',s'}$, we can assume that the representative matrix of $N$ is $[I_{r'-1},0,I_{r'-1},0]$.

Hence, we can assume that $F(p)=F_1(p)\oplus F_2(p)$, where $F_1:U\subset \mathcal{G}(1,s)\cong \mathbb P^{1,s}\to G(1,K)\cong \mathbb P^{1,s'-r'+1} $ and $F_2:\mathcal{G}(1,s)\cong \mathbb P^{1,s}\to G(r'-1,N)$. The Grassmannian variety $G(1,K)$ is the set of points bijectively corresponding to $1$-dimensional linear subspace in $K$.  Then $F_1$ is a local orthogonal map and $F_2$ is a constant map.

Therefore, we only need to character the orthogonal map $F_1$ from $\mathbb P^{1,s}$ to $\mathbb P^{1,s'-r'+1}$.

(1) When $s'-r'<s-1$, $F_1$ is a constant map by Theorem \ref{ball}. Hence, $F$ is a constant map.

(2)When $s-1\le s'-r'< 2s-2$, $F_1$ is a linear embedding from $\mathbb P^{1,s}$ to $\mathbb P^{1,s'-r'+1}$ by Theorem \ref{ball}.
Hence, $F$ is a linear map. In addition, using the orthogonal representative matrix and composing the suitable automorphisms of both sides, $F$ can be defined by
$$[z_0,\bf z]\to \left(\begin{array}{ccccc}
			I_{r'-1}&0&I_{r'-1}&0&0\\
			0&z_0&0&\bf z &0\\
		\end{array}\right),$$
		where $[z_0,{\bf z}]=[z_0,z_1,\cdots,z_{s}]$.

\end{proof}

From the relationship between the orthogonal map and the holomorphic map between Type I bounded symmetric domains  preserving Shilov boundaries, we can get the following result directly.

\begin{corollary}\label{main2}
		Let $s\ge 2$, $2\le r'\le s'\in \mathbb N$, and let $U\subset \mathbb C^{s}$ be an open neighborhood of a point in  $ S(\Omega_{1,s})$.
		let $F$ be a holomorphic  map from $U$ to $\mathbb C^{r's'}$ satisfying 
		$F(U\cap S(\Omega_{1,s}) )\subset S(\Omega_{r',s'})$.
	\begin{itemize}
	\item[(1)]If $s'-r'<s-1$, then $F$ is a constant map.
	
	\item[(2)] If $s-1\le s'-r'< 2s-2$, $F$ is a linear map. After composing with suitable automorphisms of $ \Omega_{1,s}$ and $ \Omega_{r',s'}$,
		$F$ is given by the following
		$$z\to \left(\begin{array}{ccc}
			I&0&0\\
			0& z &0\\
		\end{array}\right).$$

	\end{itemize}

\end{corollary}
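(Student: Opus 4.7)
The plan is to reduce Corollary \ref{main2} to the already-established Theorem \ref{main11} via the dictionary provided by Proposition \ref{orthShilov}. First I would identify $U \subset \mathbb C^s$ with an open neighborhood $\tilde U$ of a null point in $\mathcal G(1,s) = \mathbb P^{1,s}$ through the affine chart $z \mapsto [1, z]$, and identify $\mathbb C^{r' \times s'}$ with the affine chart $\{[I_{r'}, Z] : Z \in \mathbb C^{r' \times s'}\}$ of $\mathcal G(r', s')$. Under these identifications $U \cap S(\Omega_{1,s})$ is precisely the set of null points of $\tilde U$, so the hypothesis $F(U \cap S(\Omega_{1,s})) \subset S(\Omega_{r', s'})$ becomes the statement that the induced holomorphic map $\tilde F: \tilde U \to \mathcal G(r', s')$ sends null points to null points. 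Proposition \ref{orthShilov} then yields an open subset $\tilde U' \subset \tilde U$ containing a null point on which $\tilde F$ is a local orthogonal map in the sense of Definition \ref{orth}.

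Once $\tilde F|_{\tilde U'}$ is known to be a local orthogonal map, Theorem \ref{main11} applies directly. In case (1), $s' - r' < s - 1$ forces $\tilde F|_{\tilde U'}$ to be constant, hence by analytic continuation $F$ is constant on all of $U$. In case (2), $s - 1 \le s' - r' < 2s - 2$, the theorem supplies automorphisms $\Phi$ of $\mathcal G(1,s)$ and $\Psi$ of $\mathcal G(r', s')$ (both preserving the orthogonal structure) such that $\Psi \circ \tilde F \circ \Phi^{-1}$ is given in homogeneous coordinates by
$$[z_0, {\bf z}] \mapsto \left(\begin{array}{ccccc} I_{r'-1} & 0 & I_{r'-1} & 0 & 0 \\ 0 & z_0 & 0 & {\bf z} & 0 \end{array}\right).$$

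To finish, I would translate this Grassmannian normal form back to the affine coordinates of Corollary \ref{main2}. The key observation is that the orthogonal-structure-preserving automorphisms of $\mathcal G(r, s)$ are exactly the linear transformations induced by $U(r, s)$ acting on $\mathbb C^{r+s}$, and these coincide with the automorphisms of the bounded symmetric domain $\Omega_{r, s}$ extended biholomorphically across its Shilov boundary; therefore $\Phi$ and $\Psi$ may be taken to restrict to automorphisms of $\Omega_{1, s}$ and $\Omega_{r', s'}$. Setting $z_0 = 1$ and rewriting the representative matrix above in the standard affine chart by left-multiplying with the inverse of its first $r'$ columns, the block form collapses to
$$z \mapsto \left(\begin{array}{ccc} I & 0 & 0 \\ 0 & z & 0 \end{array}\right),$$
which is the stated expression. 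The only mildly subtle point in the whole argument is this last compatibility check between the Grassmannian-level and domain-level automorphism groups, together with verifying that the chosen normalization can be implemented while remaining inside the distinguished affine charts on both sides; both reduce to the fact that $U(r,s)$ acts transitively on the Shilov boundary $S(\Omega_{r,s})$ and on its preferred trivializations.
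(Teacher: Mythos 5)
Your proposal is correct and follows essentially the same route as the paper, which derives Corollary \ref{main2} directly from Theorem \ref{main11} by using Proposition \ref{orthShilov} to convert the null-point-preserving holomorphic map into a local orthogonal map and then translating the Grassmannian normal form back to affine coordinates. Your write-up simply makes explicit the chart identifications and the compatibility of the automorphism groups that the paper leaves implicit.
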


{\bf Example :} The following generalized Whitney map $f$ from $\Omega_{1,s}$ to $\Omega_{r',r'+2s-1}$ is given by

$$z=[z_1,\cdots,z_s] \mapsto \left(\begin{array}{ccccccccccccc}
		 z_1&\cdots&z_{s-1}&z_1z_s&z_2z_s&\cdots&z_{s}^2
		&0&\cdots&0\\ 	
		0&\cdots&0&0&0&\cdots&0
				&1&\cdots&0\\
				
				 \vdots&\ddots&\vdots&\vdots&\vdots&\ddots&\vdots
								&\vdots&\ddots&\vdots\\
								
								 0&\cdots&0&0&0&\cdots&0
												&0&\cdots&1\\
				
	\end{array}\right)$$
	preserving the Shilov boundaries . This example shows  the inequalities in Theorem \ref{main11} and Corollary \ref{main2} governing the rigidity are sharp.
	
{\bf Acknowledgements}
The author is deeply grateful to Prof. Sui-Chung Ng for the invaluable discussions, constructive suggestions and detailed feedback throughout the development of this paper. She also wishes to thank Prof. Zaitsev whose comments not only has led to better phrasing of Corollary 1.3 but also helped refining other parts of the work. Additionally, thanks are due to Prof. Xiaojun Huang and Prof. Chenyang Xu for their insightful comments and valuable suggestions. This research was supported by the National Natural Science Foundation of China (NSFC) under Grants No. 12471042 and 12471078.

\end{document}